\newtheorem{theorem}{Theorem}[section]
\newtheorem{corollary}{Corollary}[section]
\title{Uniform Shared Neighborhood Structures in Edge-Regular Graphs}
\author{
        Jared DeLeo\footnote{Auburn University, jmd0150@auburn.edu}\\
        Department of Mathematics and Statistics\\
        Auburn University\\
        Auburn, AL 36830
        }
\date{August 28, 2024}
\begin{document}

\maketitle
\sloppy

{\sc Abstract:} A shared neighborhood structure (SNS) in a graph is a subgraph induced by the intersection of the open neighbor sets of two adjacent vertices. If a SNS is the same for all adjacent vertices in an edge-regular graph, call the SNS a uniform shared neighborhood structure (USNS). USNS-forbidden graphs (graphs which cannot be a USNS of an edge-regular graph) and USNS in graph products of edge-regular graphs are examined.\\

\noindent Keywords and phrases: strongly regular, edge-regular

\section{Preliminaries}

Let $G = (V,E)$ be a finite, simple \textit{graph} with vertex set $V = V(G)$ and edge set $E = E(G)$. If $uv \in E(G)$ for vertices $u,v \in V(G)$, then their adjacency is denoted $u \sim v$. The \textit{degree} of a vertex is the number of edges it is incident to. Because $G$ is simple, the degree of $v \in V(G)$ is also the number of vertices it is adjacent to. A graph $G$ is \textit{regular} if the degrees of the vertices in $V(G)$ are all the same. The \textit{open neighborhood} of a vertex $u$ in $G$, denoted $N_G(u)$, is the set of vertices $u$ is adjacent to. If $G$ is understood, this open neighborhood will be denoted $N(u)$. A graph $G$ is \textit{edge-regular} if $G$ is both regular and, for some $\lambda$, every pair of adjacent vertices in $G$ have exactly $\lambda$ common (or shared) neighbors. If $G$ is edge-regular, we say $G \in ER(n,d,\lambda)$, where $|V(G)| = n$, $G$ is regular of degree $d$, and $|N(u) \cap N(v)| = \lambda$ for all $uv \in E(G)$.

An \textit{induced subgraph} of $G$ is a graph $H$ such that $V(H) \subseteq V(G)$, $E(H)$ contains all of the edges of $G$ among the vertices of $V(H)$, and only those edges. The induced subgraph $H$ of $G$ is denoted as $G[V(H)]$. If $G[N_G(u) \cap N_G(v)] \cong H$ for all $u \sim v; u,v \in V(G)$, where $\cong$ denotes a graph isomorphism, then $G$ has a \textit{uniform shared neighborhood structure}, abbreviated \textit{USNS}. For instance, letting $K_n$ denote the complete graph on $n$ vertices, $G = K_3 \in ER(3,2,1)$ has USNS $K_1$. 

For graphs $G$ and $H$, define $G + H$ to be the graph formed from $G$ and $H$ where $V(G+H) = V(G) \cup V(H)$ (where $V(G)$ and $V(H)$ are disjoint) and $E(G+H) = E(G) \cup E(H)$. Further, for a graph $G$ and positive integer $m$, define $mG$ to be $m$ disjoint copies of $G$. That is, $mG = G + G + \dots + G$.

Edge-regular graphs do not need to have a USNS. If $G$ is the Cartesian product of $K_4$ and $\{K_6\}\setminus\text{\{a perfect matching in $K_6$\}}$, $G \in ER(24,7,2)$ has two different shared neighborhood structures (SNS): $K_2$ and $2K_1$. Also, a SNS for one pair of adjacent vertices may also be the SNS for a different pair of adjacent vertices. Suppose $G$ is $\{K_6\}\setminus\text{\{a perfect matching in $K_6$\}}$ as in \cref{fig:K6MinusPM}. Then $G \in ER(6,4,2)$ contains a $2K_1$ USNS, of which the pair of vertices in the USNS is shared by two pairs of adjacent vertices.

\begin{figure}[h]
    \centering
    \includegraphics[width=.2\paperwidth]{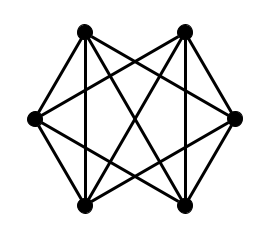}
    \caption{$K_6$ with a perfect matching removed}
    \label{fig:K6MinusPM}
\end{figure}

A number of studies of edge-regular graphs have focused on the parameter $\lambda$. These graphs with $\lambda = 1$ have been studied in \cite{Bragan} and \cite{RCA}, while those with $\lambda = 2$ have been studied in \cite{Glorioso}. Additionally, in \cite{Bragan}, \cite{RCA}, and \cite{Glorioso}, constructions are described for edge-regular graphs.

Outside of specific $\lambda$ values, relations amongst the parameters of an edge-regular graph have also been studied, notably when $d = \lambda + k$ for $k \in \{1,2,3\}$ in \cite{ExtremalER}. The research in \cite{ExSubfamilyExFamilyER} also examines parameter relations, specifically as it pertains to $n$, $\lambda$, and the number of vertices missing from any shared neighborhood.

The research presented in this paper will pertain more to the structure of edge-regular graphs, akin to the research presented in \cite{Neumaier}, which constructs a specific type of edge-regular graph, a \textit{Neumaier} graph. Within the body of this research, there is an emphasis on families of forbidden subgraphs of edge-regular graphs, as well as corresponding constructions of graphs in these families.

\section{Forbidden USNS}

There are families of graphs that cannot be a USNS in any edge-regular graph; call these USNS-forbidden graphs. These results utilize constructions by contradiction. For a graph $G$ and $u,v \in V(G)$, let $A(u,v)$ denote the set of vertices in $G$ that are adjacent to $u$ but not to $v$, and let $B(u,v)$ denote the set of vertices in $G$ that are adjacent to $v$ but not to $u$. Finally, let $X(u,v)$ denote the set of vertices in $G$ that are adjacent neither to $u$ nor $v$.

Let $P_m$ be the path graph on $m$ vertices.

\begin{theorem}\label{thm:P3}
    If $G \in ER(n,d,3)$ with a USNS, then the USNS $\ncong P_3$.
\end{theorem}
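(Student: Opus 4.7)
The plan is to assume $G \in ER(n,d,3)$ admits a USNS isomorphic to $P_3$, fix an arbitrary edge $uv$, label the common neighborhood $N(u) \cap N(v) = \{x,y,z\}$ so that $x - y - z$ is the $P_3$ with $y$ as its middle vertex, and then drive toward a contradiction by identifying a nearby edge whose common neighborhood is forced to be disconnected.

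First I would pin down the $P_3$ structures incident to $y$. Because $v$, $x$, and $z$ are all common neighbors of $u$ and $y$ and $\lambda = 3$, we get $N(u) \cap N(y) = \{v,x,z\}$; reading off the edges ($v \sim x$, $v \sim z$, $x \not\sim z$) forces $v$ to be the middle of this $P_3$. The analogous argument gives $N(v) \cap N(y) = \{u,x,z\}$ with $u$ as the middle. The key structural observation I would then isolate is that the middle vertex $y$ has no neighbor in $B(u,v)$: if some $b \in B(u,v)$ were adjacent to $y$, then $b \in N(y) \cap N(v) = \{u,x,z\}$, which is impossible since $b \not\sim u$ rules out $b=u$, and $x,z \in N(u)$ rules out $b \in \{x,z\}$.

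With this in hand, I would turn to the edge $vx$. Its common neighborhood has size $3$, contains $u$ and $y$, and has a third vertex $w$; a short case check forces $w \in B(u,v)$, because $w \sim u$ would place $w$ in $\{x,y,z\}$, which is inconsistent with $w \sim x$ together with $x \not\sim z$. But then inside $N(v) \cap N(x) = \{u,y,w\}$ the only edge is $uy$, since $u \not\sim w$ (as $w \in B(u,v)$) and $y \not\sim w$ (by the key observation), so $N(v) \cap N(x) \cong K_2 + K_1 \not\cong P_3$, contradicting the USNS assumption. The main obstacle is the bookkeeping required to keep the middle/endpoint roles of each $P_3$ straight alongside the complementary roles of $A(u,v)$ and $B(u,v)$; once $N(y) \cap B(u,v) = \emptyset$ is established, the disconnection of $N(v) \cap N(x)$ is immediate.
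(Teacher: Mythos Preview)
Your argument is correct, but it takes a different route from the paper's. The paper looks directly at the edge inside the USNS between an endpoint and the middle vertex (your $x$ and $y$): since $u,v \in N(x)\cap N(y)$, there is a third common neighbor $z$, and the $P_3$ structure on $\{u,v,z\}$ forces $z$ to be adjacent to exactly one of $u,v$, say $u$; then $\{x,z,v,z'\}$ with $z' = $ the other endpoint gives $|N(u)\cap N(y)| \ge 4$, a cardinality contradiction in one stroke. You instead first pin down $N(u)\cap N(y)$ and $N(v)\cap N(y)$ exactly, extract the structural fact $N(y)\cap B(u,v)=\emptyset$, and then exhibit an edge ($vx$) whose shared neighborhood is forced to be $K_2 + K_1$ rather than $P_3$. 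The paper's approach is shorter and yields a numerical contradiction; yours is a bit longer but produces an explicit isomorphism-type obstruction and isolates a reusable lemma (the middle vertex has no neighbors in $A(u,v)\cup B(u,v)$), which is closer in spirit to the machinery developed later for $P_3 + H$.
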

\begin{proof}
    By way of contradiction, let $u \sim v$, and let $N(u) \cap N(v) = \{w_1,w_2,w_3\}$, where $G[N(u) \cap N(v)] \cong P_3$. Without loss of generality, let $w_1 \sim w_2 \sim w_3$ and $w_1 \nsim w_3$. Then as $w_1 \sim w_2$, $G[N(w_1) \cap N(w_2)] \cong P_3$. As two of $w_1$ and $w_2$'s common neighbors are $u$ and $v$, there must exist a third vertex, say $z$, such that $N(w_1) \cap N(w_2) = \{u,v,z\}$ and $G[N(w_1) \cap N(w_2)] \cong P_3$.
    
    Without loss of generality, suppose $z \sim u$. Then $\{w_1,v,w_3,z\} \subseteq N(u) \cap N(w_2)$, contradicting $\lambda = 3$. Thus, $G[N(w_1) \cap N(w_2)] \not\cong P_3$.
\end{proof}

It should be noted that \cref{thm:P3} is a special case of \cref{thm:bipartite}, found later in the paper.

Naturally, there are a variety of graphs to sum with $P_3$ to see if it is a possible USNS for some edge-regular graph. There is a partial result for $P_3 + H$ where $H$ is an arbitrary graph.

\begin{theorem}\label{thm:P3H}
Suppose $G \in ER(n,d,\lambda)$ with a $P_3 + H$ USNS for some graph $H$. If for two adjacent vertices $u,v \in V(G)$, the $P_3$ in $G[N(u) \cap N(v)]$ consists of the vertices $w_1, w_2, w_3$, with $w_1 \sim w_2$, then $u,v \in N(w_1) \cap N(w_2)$ must be vertices in some subgraph $H$ of $G[N(w_1) \cap N(w_2)]$ and $H$ contains a $P_4$ subgraph.
\end{theorem}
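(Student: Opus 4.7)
The plan is to analyze the common neighborhood $G[N(v) \cap N(w_2)]$, which by the USNS hypothesis is isomorphic to $P_3 + H$. The vertices $u, w_1, w_3$ all lie in $N(v) \cap N(w_2)$, and they induce a $P_3$ with $u$ as the middle vertex (edges $uw_1$, $uw_3$ and non-edge $w_1 w_3$, all inherited from $G[N(u) \cap N(v)]$). A key structural fact I would establish first is that $u$ has no further neighbors in this common neighborhood: since $G[N(u) \cap N(v)] \cong P_3 + H$ has its $P_3$-component $\{w_1, w_2, w_3\}$ disjoint from its $H$-part, no $H$-part vertex of $N(u) \cap N(v)$ can be adjacent to $w_2$ in $G$, and so $N(u) \cap N(v) \cap N(w_2) = \{w_1, w_3\}$. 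Consequently $u$ has degree exactly $2$ in $G[N(v) \cap N(w_2)]$.

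To exhibit a $P_4$ in $H$, I would assume that $u$ and $v$ sit in the $P_3$-component of $G[N(w_1) \cap N(w_2)]$ (aiming both to contradict this assumption for the first conclusion, and to exploit the resulting geometry for the second). Since $u \sim v$, these two vertices are adjacent in this $P_3$, so up to symmetry the $P_3$ is $u - v - z$, where $z \in N(w_1) \cap N(w_2)$ with $z \sim v$ and $z \nsim u$. This $z$ also lies in $N(v) \cap N(w_2)$ and is adjacent to $w_1$. The four vertices $\{u, w_1, w_3, z\}$ inside $N(v) \cap N(w_2)$ then induce a connected subgraph whose edges include $uw_1$, $uw_3$, and $w_1 z$ (and possibly $w_3 z$); in every case $z - w_1 - u - w_3$ is a $P_4$ subgraph. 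Because a connected subgraph on $4$ vertices cannot fit inside the $3$-vertex $P_3$-component of $G[N(v) \cap N(w_2)] \cong P_3 + H$, this $P_4$ must lie in the $H$-part, and so $H$ contains a $P_4$ subgraph.

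The hardest step is simultaneously establishing that $u, v$ themselves land in the $H$-part of $G[N(w_1) \cap N(w_2)]$ rather than in its $P_3$-part. I expect this to emerge from producing a vertex $y \in N(u) \cap N(w_1) \cap N(w_2)$ distinct from $v$, so that $u$ has degree at least $2$ in $G[N(w_1) \cap N(w_2)]$ and therefore cannot be an endpoint of any $P_3$-component. A natural candidate comes from applying the "third $P_3$-vertex" idea to the pair $(u, w_1)$, whose common neighborhood contains the adjacent pair $v, w_2$. The main obstacle will be the case in which $\{v, w_2\}$ happens to lie in the $H$-part of $G[N(u) \cap N(w_1)]$, so that no third vertex is produced for free and a finer second-order chase through the uniform structure is needed.
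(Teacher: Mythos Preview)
Your overall architecture matches the paper: assume $u,v$ lie in the $P_3$-component of $G[N(w_1)\cap N(w_2)]$, use a second shared neighborhood to force four connected vertices into the $H$-part (hence $P_4\subseteq H$), and then push further to a contradiction. Your $P_4$ step is essentially the paper's: with your WLOG $u\!-\!v\!-\!z$ (equivalently, after the $u\leftrightarrow v$ swap, the paper's $a_1\!-\!u\!-\!v$), the paper looks at $N(u)\cap N(w_2)$ and finds $\{a_1,w_1,v,w_3\}$ connected there; you look at $N(v)\cap N(w_2)$ and find $\{z,w_1,u,w_3\}$. Same idea, same conclusion.

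The gap is in your third paragraph. You propose showing $u$ acquires a second neighbor in $N(w_1)\cap N(w_2)$ by producing $y\in N(u)\cap N(w_1)\cap N(w_2)$ from the pair $(u,w_1)$, and you correctly flag the obstacle: if $\{v,w_2\}$ sits in the $H$-part of $G[N(u)\cap N(w_1)]$, no third $P_3$-vertex is handed to you. You leave this as a ``finer second-order chase,'' which is where the proposal stops being a proof. The paper closes exactly this gap, and in fact by looking at the \emph{same} neighborhood you name (up to the $u\leftrightarrow v$ symmetry). The key observation you are missing is that, because $w_1$ is an \emph{endpoint} of the original $P_3$ (so $w_1\not\sim w_3$ and $w_1\not\sim h_i$ for every $h_i$ in the $H$-part of $N(u)\cap N(v)$), every vertex of $N(u)\cap N(w_1)$ other than $v$ and $w_2$ must lie in $A(u,v)$. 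Now any such $a_i$ adjacent to $w_2$ would land in $N(w_1)\cap N(w_2)$, contradicting the already-determined description of that set (only $u,v,z$ and $X(u,v)$-vertices). The paper then uses the just-established bound $|H|\ge 4$ to argue that the component of $G[N(u)\cap N(w_1)]$ containing the edge $vw_2$ cannot be just $\{v,w_2\}$, forcing some $a_i\sim w_2$ and finishing the contradiction.

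So rather than hunting for a neighbor of $u$ inside $N(w_1)\cap N(w_2)$, turn the problem around: show that no $a_i\in N(u)\cap N(w_1)$ can be adjacent to $w_2$ (else it enters $N(w_1)\cap N(w_2)$ illegally), hence $\{v,w_2\}$ is an isolated edge in $G[N(u)\cap N(w_1)]$, and argue this is incompatible with the $P_3+H$ structure once $|H|\ge 4$. That is the missing idea that converts your outline into the paper's proof.
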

\begin{proof}
Let $u,v$ be adjacent vertices in $G$, and $N(u) \cap N(v) = \{w_1,w_2,w_3,h_1,\dots,h_{|H|}\}$, where $G[w_1,w_2,w_3] \cong P_3$ and $G[h_1,\dots,h_{|H|}] \cong H$ for some arbitrary graph $H$ such that $N(u) \cap N(v) \cong P_3 + H$.

By way of contradiction, let $u$ and $v$ be vertices of $P_3$ in $G[N(w_1) \cap N(w_2)]$. The third vertex of $P_3$ in $G[N(w_1) \cap N(w_2)]$ must be an element of $A(u,v)$ or $B(u,v)$. Without loss of generality, suppose the remaining vertex is $a_1 \in A(u,v)$. Then every vertex of $H$ in $G[N(w_1) \cap N(w_2)]$ must be in $X(u,v)$, as any vertex in $A(u,v)$ or $B(u,v)$ would have an adjacency to $u$ or $v$, respectively. Thus, $N(w_1) \cap N(w_2) = \{a_1,u,v,x_1,\dots,x_{|H|}\}$.

Consider the adjacent vertices $u$ and $w_2$. Notice that $\{a_1,w_1,v,w_3\} \subseteq N(u) \cap N(w_2)$, and $G[a_1,w_1,v,w_3]$ is connected. As these four vertices are part of the same component in $N(w_2) \cap N(u)$, then they cannot contain the $P_3$ component and thus are contained in the $H$ component so $H$ must contain a $P_4$.

Now consider the adjacent vertices $v$ and $w_1$. As $\{w_2,u\} \subset N(v) \cap N(w_1)$ and $w_2 \sim u$, then $w_2$ and $u$ are in the same component of $G[N(v) \cap N(w_1)]$. The only other vertices contained in $N(v) \cap N(w_1)$ are in $B(u,v)$, so $N(v) \cap N(w_1) = \{w_2,u,b_1,\dots,b_{|H| + 1}\}$. As $|H| \geq 4$ and $|P_3| = 3$, then there must be some $b_i$ adjacent to either $w_2$ or $u$. By definition of $B(u,v)$, $b_i \not\sim u$, so $w_2 \sim b_i$ for some $1 \leq i \leq |H| + 1$. But then $N(w_1) \cap N(w_2)$ contains $b_i$, a contradiction. Thus, $u$ and $v$ cannot be vertices of $P_3$ in $G[N(w_1) \cap N(w_2)]$.
\end{proof}

A natural corollary follows from the above theorem to forbid a union of isolated vertices with $P_3$.

\begin{corollary}\label{cor:P3Kl}
    If $G \in ER(n,d,3 + \ell)$ with a USNS, then the USNS $\ncong P_3 + \ell K_1$, where $\ell \geq 1$.
\end{corollary}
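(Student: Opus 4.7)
The plan is to read the corollary as a direct specialization of \cref{thm:P3H} to $H = \ell K_1$, then derive a contradiction from the simple observation that $\ell K_1$ has no edges.

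First I would suppose for contradiction that $G \in ER(n,d,3+\ell)$ has a USNS isomorphic to $P_3 + \ell K_1$. I would pick any pair of adjacent vertices $u,v \in V(G)$ and let $w_1, w_2, w_3$ be the three vertices of the $P_3$ component of $G[N(u) \cap N(v)]$, labeled so that $w_1 \sim w_2$. Because the USNS is $P_3 + \ell K_1$, this $P_3$ is unambiguously defined as the unique nontrivial component, so the setup of \cref{thm:P3H} is in place with the abstract graph $H$ of that theorem playing the role of $\ell K_1$.

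Next I would invoke \cref{thm:P3H}, whose conclusion forces $H$ to contain a $P_4$ subgraph. Since $\ell K_1$ is, by definition of $mG$, a disjoint union of isolated vertices and therefore has no edges at all, it cannot contain $P_4$ (which has three edges). This contradiction shows that no such USNS can exist. As an alternative route leading to the same contradiction, one can instead use the first half of \cref{thm:P3H}: since $u \sim v$ in $G$ and both $u,v$ lie in $N(w_1)\cap N(w_2)$, the pair $uv$ is an edge in $G[N(w_1)\cap N(w_2)]$; in $P_3+\ell K_1$ all edges are in the $P_3$ component, so $u,v$ must be vertices of that $P_3$, which \cref{thm:P3H} rules out.

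The main (and essentially only) obstacle is confirming that the hypotheses of \cref{thm:P3H} really do apply verbatim to the USNS $P_3 + \ell K_1$; this is immediate since $\ell K_1$ is a legitimate choice for the arbitrary graph $H$ appearing there, and the USNS condition on $G$ guarantees the required structure of $G[N(u)\cap N(v)]$ for every adjacent pair. No further case analysis, construction, or computation is needed.
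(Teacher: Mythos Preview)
Your proposal is correct and follows essentially the same approach as the paper: specialize \cref{thm:P3H} to $H=\ell K_1$ and read off a contradiction. The paper's proof uses the first conclusion of \cref{thm:P3H} (that $u,v$ must lie in the $H$ part of $G[N(w_1)\cap N(w_2)]$, impossible since $u\sim v$ but $\ell K_1$ is edgeless), which is exactly your ``alternative route''; your primary route via the $P_4$ clause is an equally immediate way to finish.
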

\begin{proof}
    By way of contradiction, let $u,v$ be adjacent vertices in $G$ such that $N(u) \cap N(v) = \{w_1,\dots,w_{\ell+3}\}$ where $G[w_1,w_2,w_3] \cong P_3$ and $G[w_4,\dots,w_{\ell+3}] \cong \ell K_1$. Let $\ell K_1 = H$. Then $G[N(u) \cap N(v)] \cong P_3 + H$.

    By $\cref{thm:P3H}$, $u$ and $v$ are elements of $H$ in $N(w_1) \cap N(w_2)$. But $H$ has no edges and $u \sim v$, a contradiction.
\end{proof}

Since $P_3$ is a forbidden USNS, it is natural to ask if longer paths are also forbidden. While the proof cases are more numerous, the below theorem proves that $P_4$, like $P_3$, is USNS-forbidden.

\begin{theorem}\label{thm:P4}
    If $G \in ER(n,d,4)$ with a USNS, then the USNS $\ncong P_4$.
\end{theorem}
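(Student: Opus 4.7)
The plan is to argue by contradiction, reusing the idea of \cref{thm:P3} but chasing the fourth common neighbor through three consecutive intersections rather than one. Fix adjacent $u, v$ with $N(u) \cap N(v) = \{w_1, w_2, w_3, w_4\}$, relabeled so that $w_1 \sim w_2 \sim w_3 \sim w_4$ is the induced $P_4$. I first examine $N(u) \cap N(w_2)$: it already contains $\{v, w_1, w_3\}$ (and not $w_4$, since $w_2 \nsim w_4$), so there is exactly one additional vertex $p$. The subgraph on $\{v, w_1, w_3\}$ is the $P_3$ $w_1 - v - w_3$ with $v$ as center, so for the $P_4$ extension, $p$ cannot be adjacent to $v$ (else $v$ would have degree $3$), and $p$ must be adjacent to exactly one of $w_1, w_3$ (adjacency to both would create a $C_4$). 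Using the automorphism $w_i \leftrightarrow w_{5-i}$ of the original $P_4$, I assume without loss of generality that $p \sim w_3$ while $p \nsim w_1$ and $p \nsim v$.

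Next I pin down $N(u) \cap N(w_3)$. Since $p \sim u$ and $p \sim w_3$, and $\{v, w_2, w_4\}$ already lies in this set, I get $N(u) \cap N(w_3) = \{v, w_2, w_4, p\}$. The known edges inside this set are $v - w_2$, $v - w_4$, and $w_2 - p$; if additionally $p \sim w_4$, the induced subgraph would be a $C_4$ rather than a $P_4$, so $p \nsim w_4$, with the induced $P_4$ being $p - w_2 - v - w_4$.

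The contradiction will then come from $N(u) \cap N(w_4)$. Of the already-named vertices, only $v$ and $w_3$ lie in this set: $w_1$ and $w_2$ are non-neighbors of $w_4$ by the original $P_4$ structure, and $p \nsim w_4$ by the previous step, so two entirely new vertices $s_1, s_2$ must appear. If some $s_i$ were adjacent to $v$, then $s_i \in N(u) \cap N(v) = \{w_1, w_2, w_3, w_4\}$ and also $s_i \sim w_4$, forcing $s_i = w_3$, a duplicate. If instead $s_i \sim w_3$, then $s_i \in N(u) \cap N(w_3) = \{v, w_2, w_4, p\}$ and also $s_i \sim w_4$, forcing $s_i = v$, again a duplicate. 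So neither $s_i$ is adjacent to $v$ or to $w_3$, and the induced subgraph on $\{v, w_3, s_1, s_2\}$ has only the edge $v - w_3$ plus possibly $s_1 - s_2$, giving at most two edges. Since $P_4$ has three edges, this contradicts the USNS assumption.

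The main obstacle I expect is the bookkeeping: each intersection introduces a new auxiliary vertex, and one must verify carefully which previously named vertices do or do not lie in it before declaring the new vertex genuinely distinct. The key efficiency is that once $p \sim w_3$ is fixed, $N(u) \cap N(w_3)$ is saturated at exactly four known vertices, and this lets the final $N(u) \cap N(w_4)$ intersection be squeezed by two independent duplicate-forcing arguments without re-opening the casework on how $p$ attached.
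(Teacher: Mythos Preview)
Your argument for the case $p \sim w_3$ is clean and correct, but the ``without loss of generality'' step that gets you there is not valid. The reflection $w_i \leftrightarrow w_{5-i}$ does not fix $w_2$: it sends $w_2$ to $w_3$, so it carries the statement ``the extra vertex in $N(u)\cap N(w_2)$ is adjacent to $w_1$'' to ``the extra vertex in $N(u)\cap N(w_3)$ is adjacent to $w_4$'', \emph{not} to ``the extra vertex in $N(u)\cap N(w_2)$ is adjacent to $w_3$''. Concretely, suppose $p\sim w_1$ and let $q$ be the extra vertex in $N(u)\cap N(w_3)=\{v,w_2,w_4,q\}$. If $q\sim w_2$ then $q\in N(u)\cap N(w_2)=\{v,w_1,w_3,p\}$; but $q\neq v$, $q\neq w_3$ (as $q\in N(w_3)$), $q\neq w_1$ (as $w_1\nsim w_3$), and $q\neq p$ (as $p\nsim w_3$), a contradiction. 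Hence $q\sim w_4$. The configuration $(p\sim w_1,\ q\sim w_4)$ is therefore forced in this branch, and it is \emph{invariant} under the reflection (which swaps $p\leftrightarrow q$ and $w_1\leftrightarrow w_4$). So neither your direct argument nor its mirror image touches this case, and your key identity $N(u)\cap N(w_3)=\{v,w_2,w_4,p\}$ fails here because $p\notin N(w_3)$.

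This ``both extra vertices point outward'' case genuinely needs separate work; pushing further along $N(u)\cap N(w_1)$ and $N(u)\cap N(p)$ produces an unbounded chain of new vertices rather than the saturation you exploit. The paper avoids this by organizing the case analysis around $N(w_1)\cap N(w_2)$ (an edge inside the USNS, guaranteed to contain both $u$ and $v$) and splitting on how the remaining two common neighbors distribute among $A(u,v)$, $B(u,v)$, and $X(u,v)$; the $u\leftrightarrow v$ symmetry then legitimately halves the casework. You could either adopt that decomposition or supply a direct argument for the outward case, but as written the proof is incomplete.
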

\begin{proof}
    Suppose for contradiction $\exists$ $G \in ER(n,d,4)$ with USNS $\cong P_4$. Let $u \sim v$, and let $N(u) \cap N(v) = \{w_1,w_2,w_3,w_4\}$, where $G[N(u) \cap N(v)] \cong P_4$ with endpoints $w_1$ and $w_4$ and $w_1 \sim w_2$. $G[N(w_1) \cap N(w_2)] \cong P_4$, as $G$ has a $P_4$ USNS.

    \textbf{Case 1.} $N(w_1) \cap N(w_2) = \{a_1,u,v,b_1\}$, such that $G[N(w_1) \cap N(w_2)] \cong P_4$ having endpoints $a_1$ and $b_1$, with $a_1 \in A(u,v)$ and $b_1 \in B(u,v)$. See \cref{fig:P4Case1} for reference.

    \begin{figure}[h]
        \centering
        \includegraphics[width=.4\paperwidth]{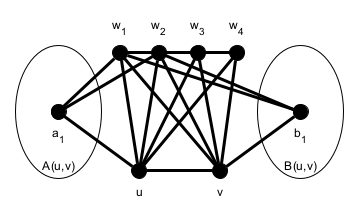}
        \caption{Beginning of case 1 in the proof of \cref{thm:P4}}
        \label{fig:P4Case1}
    \end{figure}
    
    Consider the vertices $u$ and $w_1$, which are adjacent by assumption. As vertices $u$ and $w_1$ have common neighbors $a_1, w_2,$ and $v$, then there must exist another vertex in their shared neighborhood. As $w_1$ is not adjacent to $w_3$ and $w_4$, and $u$ is only adjacent to $v$, the $w_i$ vertices, and vertices in $A(u,v)$, then the 4th vertex in this common neighborhood must be some $a_2 \in A(u,v)$. As $a_2 \in A(u,v)$, then $a_2 \not\sim v$. Since $w_1$ and $w_2$ cannot be both adjacent to $a_2$ (otherwise $a_2$ would be contained in $N(w_1) \cap N(w_2)$), then $a_2 \not\sim w_2$. So $a_2 \sim a_1$, and $G[N(u) \cap N(w_1)] \cong P_4$ with endpoints $a_2$ and $v$.

    Now consider adjacent vertices $u$ and $w_2$. $N(u) \cap N(w_2) = \{a_1,w_1,v,w_3\}$ is completely determined from previous assumptions. As $G[N(u) \cap N(w_2)] \cong P_4$, then this must have endpoints $w_3$ and $a_1$, so $w_3 \nsim a_1$.

    Now consider the adjacent vertices $v$ and $w_1$. Then $N(v) \cap N(w_1) = \{b_2,b_1,w_2,u\}$, where $b_2 \in B(u,v)$. Using similar logic to how $N(u) \cap N(w_1)$ was constructed, then we conclude that $G[N(v) \cap N(w_1)] \cong P_4$ having endpoints $b_2$ and $u$, with $b_2 \nsim w_2$ and $b_2 \sim b_1$.

    Now consider the adjacent vertices $v$ and $w_2$. $N(v) \cap N(w_2) = \{b_1,w_1,u,w_3\}$ is completely determined from previous assumptions. As $G[N(v) \cap N(w_2)] \cong P_4$, then this must have endpoints $w_3$ and $b_1$, so $w_3 \nsim b_1$.

    Lastly, consider the adjacent vertices $w_2$ and $w_3$. As $\{u,v\} \in N(w_2) \cap N(w_3)$, $\exists$ $z \in \{N(w_2) \cap N(w_3)\} \setminus \{u,v\}$ such that $z \in A(u,v)$ or $z \in B(u,v)$. As $w_2 \nsim a_2$ and $w_2 \nsim b_2$ (from $N(u) \cap N(w_2)$ and $N(v) \cap N(w_2)$, respectively), then $z \neq a_2$ and $z \neq b_2$. As $w_3 \nsim a_1$ and $w_3 \nsim b_1$ (implied from $N(u) \cap N(w_2)$ and $N(v) \cap N(w_2)$, respectively), then $z \neq a_1$ and $z \neq b_1$. Without loss of generality, say $z \in A(u,v)$. Then $N(u) \cap N(w_2)$ contains $z \in A(u,v)\setminus {a_1}$, a contradiction. Thus, $N(w_1) \cap N(w_2) \neq \{a_1,u,v,b_1\}$.

    \textbf{Case 2.} $N(w_1) \cap N(w_2) = \{v,u,a_1,x_1\}$, where $a_1 \in A(u,v)$ and $x_1 \in X(u,v)$. By assumption, $u \sim a_1$, $u \nsim x_1$, and $v \nsim x_1$, so $v$ and $x_1$ are endpoints of $G[N(w_1) \cap N(w_2)]$.

    Consider adjacent vertices $w_1$ and $v$. Then $N(w_1) \cap N(v) = \{u,w_2,b_2,b_3\}$ for some $b_2,b_3 \in B(u,v)$. This follows from the facts that $v$ has no neighbors in $A(u,v) \cup X(u,v)$ and $w_1$ is adjacent to no $w_j$; $j > 2$. Therefore, the two vertices in $N(w_1) \cap N(v)$ other than $u$ and $w_2$ must be in $B(u,v)$. By assumption, $u$ is not adjacent to any vertex in $B(u,v)$, so $w_2$ must be adjacent to one of $\{b_2,b_3\}$. Without loss of generality, $w_2 \sim b_2$. However, this implies $N(w_1) \cap N(w_2)$ contains $b_2$, a contradiction. So $N(w_1) \cap N(w_2) \neq \{v,u,a_1,x_1\}$.

    \textbf{Case 3.} $N(w_1) \cap N(w_2) = \{a_2,a_1,u,v\}$, where $a_1,a_2 \in A(u,v)$. By assumption, $u \sim a_2$ and $u \sim a_1$, so $u$ is not an endpoint of $G[N(w_1) \cap N(w_2)]$. $v$ must be an endpoint, as $v$ is only adjacent to $u$. Without loss of generality, say $a_2$ is an endpoint and $a_1$ is not an endpoint in $G[N(w_1) \cap N(w_2)]$. As $a_2 \sim u$, then $G[N(w_1 \cap N(w_2)] \ncong P_4$, a contradiction. So $N(w_1) \cap N(w_2) \neq \{a_2,a_1,u,v\}$. 
    
    This exhausts all possibilities for $N(w_1) \cap N(w_2)$, so $G$ cannot have $P_4$ as a USNS.    
\end{proof}

\begin{theorem}\label{thm:P5}
    Let $G \in ER(n,d,\lambda)$ with a $P_\lambda$ USNS for $\lambda \geq 5$, and let $u \sim v$ in $G$ with $N(u) \cap N(v) = \{w_1,w_2,\dots,w_\lambda\}$, where $w_1$ is an endpoint of $G[N(u) \cap N(v)]$. If $w_1 \sim w_2$, then $N(w_1) \cap N(w_2)$ contains exactly one vertex from $N(u) \setminus (N(u) \cap N(v))$ and exactly one vertex from $N(v) \setminus (N(u) \cap N(v))$.
\end{theorem}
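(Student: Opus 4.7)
The plan is to note that $G[N(w_1)\cap N(w_2)]\cong P_\lambda$ contains the adjacent pair $u,v$, so $u,v$ occupy consecutive positions on this path; the theorem then reduces to showing that neither $u$ nor $v$ is an endpoint of this $P_\lambda$. Once that is established, the path has the local form $\cdots-a-u-v-b-\cdots$, where $a$ is the other path-neighbor of $u$ and $b$ is the other path-neighbor of $v$. Because the subgraph is induced, $a\not\sim v$ and $b\not\sim u$, so $a\in A(u,v)$ and $b\in B(u,v)$; any other vertex of $N(w_1)\cap N(w_2)$ lying in $A(u,v)$ would provide $u$ a third subgraph-neighbor, violating the degree-$2$ path structure, and the same argument yields uniqueness for the $B(u,v)$-vertex.

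First I would catalogue the members of $N(w_1)\cap N(w_2)$. Because $w_1$ is an endpoint of the original $P_\lambda$, its only neighbor among $w_1,\ldots,w_\lambda$ is $w_2$, so no $w_i$ with $i\geq 3$ lies in $N(w_1)\cap N(w_2)$; the $\lambda$ members are therefore $u$, $v$, and $\lambda-2$ vertices drawn from $A(u,v)\cup B(u,v)\cup X(u,v)$.

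Next I would suppose for contradiction that $u$ is an endpoint of $G[N(w_1)\cap N(w_2)]$ (the case where $v$ is the endpoint is symmetric by swapping $u$ and $v$). Since this subgraph is induced, $v$ is the only neighbor of $u$ inside $N(w_1)\cap N(w_2)$, so
\[
A(u,v)\cap N(w_1)\cap N(w_2)=\emptyset.
\]
I would then invoke the USNS hypothesis on the adjacent pair $u,w_1$ to force a contradiction. Since $N(u)=\{v,w_1,\ldots,w_\lambda\}\cup A(u,v)$ and $w_1$'s only neighbor among the $w_i$'s is $w_2$, one computes
\[
N(u)\cap N(w_1)=\{v,w_2\}\cup\bigl(A(u,v)\cap N(w_1)\bigr),
\]
and this must induce a $P_\lambda$. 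In that induced subgraph $v$'s only neighbor is $w_2$, because $v\not\sim a$ for every $a\in A(u,v)$ by definition; hence $v$ is an endpoint of the path. Since $\lambda\geq 5$ the path has a third vertex after $v$ and $w_2$, and that third vertex is a neighbor of $w_2$ lying in $A(u,v)\cap N(w_1)$, hence in $A(u,v)\cap N(w_1)\cap N(w_2)$, contradicting the displayed emptiness.

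The main obstacle is selecting the right auxiliary USNS: inspecting $G[N(w_1)\cap N(w_2)]$ alone does not reveal the missing $A(u,v)$-vertex, so one needs a common-neighborhood whose $P_\lambda$ structure is forced to pass through $A(u,v)\cap N(w_1)\cap N(w_2)$. The pair $u,w_1$ works because the very definition of $A(u,v)$ pins $v$ as a degree-$1$ vertex inside $N(u)\cap N(w_1)$, determining the two starting steps of the corresponding $P_\lambda$ and forcing the second step into the required intersection. Once the endpoint case is ruled out for each of $u$ and $v$, the exact counts of one $A(u,v)$-vertex and one $B(u,v)$-vertex follow immediately from the degree-$2$ condition at $u$ and $v$ in the $P_\lambda$.
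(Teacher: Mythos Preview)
Your proof is correct and follows essentially the same route as the paper's. The paper organizes the argument as two cases on $|A(u,v)\cap N(w_1)\cap N(w_2)|$ (zero, or at least two), while you phrase the same dichotomy as ``$u$ is an endpoint'' versus the degree-$2$ constraint at $u$; in both versions the contradiction for the zero case comes from inspecting $G[N(u)\cap N(w_1)]$ and observing that $w_2$'s second path-neighbor there would have to land in $A(u,v)\cap N(w_1)\cap N(w_2)$.
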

\begin{proof}
    \textbf{Case 1.} $N(w_1) \cap N(w_2)$ contains no vertex from $A(u,v)$, without loss of generality. So $N(w_1) \cap N(w_2)$ contains $u, v$, a vertex in $B(u,v)$, and $\lambda - 3$ vertices in $X(u,v)$.

    Consider adjacent vertices $u$ and $w_1$. Then $N(u) \cap N(w_1)$ contains $v$ and $w_2$. But as $u$ is not adjacent to any vertex in the set $B(u,v)$ nor $X(u,v)$, the remainder of the vertices in this common neighborhood must be elements of $A(u,v)$. Yet there is no adjacency from these vertices in $A(u,v)$ to $v$. If any of these vertices in $A(u,v)$ were to be adjacent to $w_2$, then $N(w_1) \cap N(w_2)$ would contain a vertex from $A(u,v)$, contradicting our case assumption. As $\lambda \geq 5$, then $G[N(u) \cap N(w_1)] \ncong P_\lambda$, a contradiction.

    \textbf{Case 2.} $N(w_1) \cap N(w_2)$ contains more than one vertex from $A(u,v)$, say $m$ vertices from $A(u,v)$. Then $u$ in $G[N(w_1) \cap N(w_2)]$ has degree $m+1$. As $m \geq 2$, then $G[N(w_1) \cap N(w_2)] \ncong P_\lambda$, a contradiction.

    Thus, $N(w_1) \cap N(w_2)$ must contain exactly one vertex from $A(u,v)$ and exactly one vertex from $B(u,v)$.
\end{proof}

Despite paths not being completely settled as a family of USNS-forbidden graphs, there are other families of graphs that are. The following theorems tackle a few of these families, namely the family of complete bipartite graphs of different partition sizes, star graphs, and wheel graphs.

\begin{theorem}\label{thm:bipartite}
    If $G \in ER(n,d,m_1 + m_2)$ with a USNS, then for all $m_1 \neq m_2$, the USNS $\ncong K_{m_1,m_2}$.
\end{theorem}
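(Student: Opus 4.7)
The plan is to argue by contradiction, assuming without loss of generality that $m_1 < m_2$, by comparing two different USNSs incident to a single vertex $u$. Fix $u \sim v$ in $G$ and label $N(u) \cap N(v) = A \cup B$, with $|A| = m_1$ and $|B| = m_2$, so that $A$ and $B$ are the two parts of the $K_{m_1,m_2}$ USNS. Pick $a_1 \in A$ and $b_1 \in B$; both are adjacent to $u$, so the edges $u \sim a_1$ and $u \sim b_1$ will each yield another $K_{m_1,m_2}$ USNS to exploit.

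Next, I would pin down the coarse structure of these two neighborhoods by elementary counting. In $N(u) \cap N(a_1)$, the vertices coming from $N(u) \cap N(v)$ are exactly $B$ (since $a_1$ is non-adjacent to the other $a_i$'s and adjacent to every $b_j$), plus $v$. Any other common neighbor of $u$ and $a_1$ must lie in $A(u,v)$, so $N(u) \cap N(a_1) = \{v\} \cup B \cup T$ for some $T \subseteq A(u,v)$, and $|T| = m_1 - 1$ follows from $|N(u)\cap N(a_1)| = m_1 + m_2$. By the symmetric computation, $N(u) \cap N(b_1) = \{v\} \cup A \cup U$ with $U \subseteq A(u,v)$ and $|U| = m_2 - 1$.

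The decisive step is to force the bipartition of $G[N(u) \cap N(b_1)] \cong K_{m_1,m_2}$. In this induced subgraph, $v$ is adjacent to every $a_i \in A$ (using edges inherited from the original USNS of $u,v$) and to no vertex of $U$ (since $U \subseteq A(u,v)$ means $u' \not\sim v$ for $u' \in U$). Thus $\deg(v) = m_1$ in this USNS, placing $v$ in the size-$m_2$ part, which must then be $\{v\} \cup U$, with the opposite part being $A$. Complete bipartiteness forces $a_1 \sim u'$ for every $u' \in U$, so $U \subseteq N(a_1) \cap A(u,v) = T$. Since $|U| = m_2 - 1 > m_1 - 1 = |T|$, this is a contradiction.

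The main obstacle is structural rather than computational: one must verify that the bipartition of each $K_{m_1,m_2}$ USNS is \emph{forced} by the degrees of boundary vertices like $v$, rather than merely being consistent with multiple choices. Once the partition of $G[N(u)\cap N(b_1)]$ is identified, the containment $U \subseteq T$ and the size contradiction follow immediately, and the hypothesis $m_1 \neq m_2$ enters only at this final comparison.
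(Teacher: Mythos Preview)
Your argument is correct. Both your proof and the paper's reach a contradiction by overlaying several copies of the $K_{m_1,m_2}$ USNS, but you choose different edges and a different endgame. The paper first analyzes the USNS of an \emph{internal} edge $w_1z_1$ (with $w_1$ in the size-$m_1$ part and $z_1$ in the size-$m_2$ part of $G[N(u)\cap N(v)]$), places $u$ and $v$ in opposite parts of that bipartition (say $u$ in the size-$m_1$ part, which fixes the remaining vertices as lying in $A(u,v)\cup B(u,v)$), and then counts $|N(u)\cap N(w_1)| \ge 2m_2$ to get $m_2 \le m_1$; a symmetric count on $N(v)\cap N(z_1)$ yields $m_1 \le m_2$. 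You instead stay anchored at $u$ throughout, comparing the two USNSs $G[N(u)\cap N(a_1)]$ and $G[N(u)\cap N(b_1)]$, and exploit the fact that the degree of $v$ in each is completely determined (since $N(v)\cap N(u)$ is already known) to force the bipartition of the second; the containment $U\subseteq T$ then replaces the paper's pair of inequalities. Your route is a bit more self-contained in that it never needs the internal edge $w_1z_1$, while the paper's symmetric double inequality makes the hypothesis $m_1\neq m_2$ appear as the conclusion $m_1=m_2$ rather than as an upfront strict-size assumption.
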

\begin{proof}
    Let $u \sim v$, and $N(u) \cap N(v) = \{w_1,w_2,\dots,w_{m_1},z_1,z_2,\dots,z_{m_2}\}$, where $G[w_1,w_2,\dots,w_{m_1},z_1,z_2,\dots,z_{m_2}] \cong K_{m_1,m_2}$ with $w_1,\dots,w_{m_1}$ in one part and $z_1,\dots,z_{m_2}$ in the other part.

    \begin{figure}[h]
        \centering
        \includegraphics[width=.2\paperwidth]{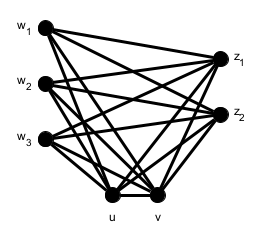}
        \caption{A $K_{3,2}$ shared neighborhood of vertices $u$ and $v$.}
        \label{fig:K32}
    \end{figure}
    
    Consider the adjacent vertices $w_1$ and $z_1$. Then without loss of generality, $N(w_1) \cap N(z_1) = \{u,v,a_1,\dots,a_{m_2-1},b_1,\dots,b_{m_1-1}\}$, where $a_1,\dots,a_{m_2-1} \in A(u,v)$ and $b_1,\dots,b_{m_1-1} \in B(u,v)$. So $G[N(w_1) \cap N(z_1)] \cong K_{m_1,m_2}$, where $v,a_1,\dots,a_{m_2-1}$ are in one part and $u,b_1,\dots,b_{m_1-1}$ are in the other part.

    Now consider the adjacent vertices $u$ and $w_1$. Then by previous assumptions, $N(u) \cap N(w_1)$ contains $\{z_1,\dots,z_{m_2},a_1,\dots,a_{m_2-1},v\}$. Further, as $\lambda = m_1 + m_2$ by assumption and $|N(u) \cap N(w_1)| \geq 2m_2$, then $m_2 \leq m_1$. By symmetry, $m_1 \leq m_2$, so $m_1 = m_2$. Thus, $K_{m_1,m_2}$ is only possible as a USNS when $m_1 = m_2$.
\end{proof}

As such, a graph such as the one in \cref{fig:K32} is a forbidden USNS, where $m_1 = 3$ and $m_2 = 2$. What immediately follows from \cref{thm:bipartite} is a fact about the \textit{star graph} $S_l$, which is a graph with one central vertex and $l-1$ vertices adjacent to it, but not to each other.

\begin{corollary}\label{cor:star}
    If $G \in ER(n,d,\ell)$ with a USNS, then for all $l \geq 3$, the USNS $\ncong S_\ell$.
\end{corollary}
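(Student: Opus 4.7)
The plan is to observe that the star graph $S_\ell$ is nothing other than the complete bipartite graph $K_{1,\ell-1}$: the central vertex forms one part of size $1$, and the $\ell-1$ leaves (which are pairwise non-adjacent but each adjacent to the center) form the other part of size $\ell-1$. This immediately puts the corollary in the scope of \cref{thm:bipartite}.

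Concretely, I would set $m_1 = 1$ and $m_2 = \ell - 1$, so that $m_1 + m_2 = \ell$, matching the parameter $\lambda = \ell$ in the hypothesis. The only thing to check is that $m_1 \neq m_2$, i.e.\ that the two parts have different sizes, which is exactly where the assumption $\ell \geq 3$ enters: it guarantees $\ell - 1 \geq 2 > 1$, so $m_1 \neq m_2$. Applying \cref{thm:bipartite} then gives that $K_{1,\ell-1} \cong S_\ell$ cannot be a USNS of any $G \in ER(n,d,\ell)$, which is the desired conclusion.

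There is essentially no obstacle here; the corollary is a direct specialization of the preceding theorem, and the short proof should simply invoke \cref{thm:bipartite} after identifying $S_\ell$ with $K_{1,\ell-1}$ and verifying the inequality $1 \neq \ell - 1$ for $\ell \geq 3$.
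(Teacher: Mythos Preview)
Your proposal is correct and matches the paper's own proof essentially verbatim: identify $S_\ell \cong K_{1,\ell-1}$, set $m_1 = 1$, $m_2 = \ell-1$, and invoke \cref{thm:bipartite}. Your explicit verification that $\ell \geq 3$ forces $m_1 \neq m_2$ is a nice touch that the paper leaves implicit.
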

\begin{proof}
    Let $m_1 = 1$ and $m_2 = \ell - 1$. Then $K_{m_1,m_2} \cong S_\ell$. So $S_\ell$ cannot be a USNS by \cref{thm:bipartite}.
\end{proof}

As noted earlier, \cref{thm:bipartite} generalizes \cref{thm:P3}, as $P_3 \cong K_{1,2}$.

This is not to suggest that complete bipartite graphs with equal part sizes are also USNS-forbidden. On the contrary, consider $K_4$, which has a $K_2 \cong K_{1,1}$ USNS.

In the following result, define the \textit{wheel graph} $W_m$ to be a connected graph on $m+1$ vertices, such that $m$ vertices induce a cycle, and the $(m+1)^{st}$ vertex is adjacent to all vertices in the cycle. Note that for $m \geq 4$, $W_m$ is not a regular graph.

\begin{theorem}\label{thm:Wheel}
    If $G \in ER(n,d,m+1)$ has a USNS, then for all $m \geq 4$, the USNS $\ncong W_m$.
\end{theorem}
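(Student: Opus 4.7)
The plan is to derive a contradiction by tracking the unique "hub" (degree-$m$) vertex across two nested shared neighborhoods, exploiting the fact that for $m\geq 4$ the hub of $W_m$ is distinguished by its degree.

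Suppose $G \in ER(n,d,m+1)$ has a $W_m$ USNS, and pick $u \sim v$ with
$N(u) \cap N(v) = \{c, w_1, \ldots, w_m\}$,
where $c$ is the hub and $w_1 \cdots w_m w_1$ is the rim. The first step is to observe that the edge $uc$ already has its $m+1$ common neighbors locked in: $v, w_1, \ldots, w_m$ are all adjacent to both $u$ and $c$, and since $\lambda = m+1$, this forces the equality
\[
N(u) \cap N(c) = \{v, w_1, \ldots, w_m\}.
\]
In particular, $N(u) \cap N(c)$ contains no vertex of $A(u,v)$.

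Next I would examine the edge $u w_1$. Its common neighborhood automatically contains $v, c, w_2, w_m$, and needs $m-3 \geq 1$ additional vertices. Any such extra must lie in $N(u) \setminus (\{v\} \cup N(v)) = A(u,v)$, since it cannot appear in $N(u) \cap N(v) = \{c, w_1,\ldots,w_m\}$ (none of the $w_j$ with $j \neq 2,m$ are adjacent to $w_1$ in $W_m$, and $c, w_1$ are excluded trivially). So write
\[
N(u) \cap N(w_1) = \{v, c, w_2, w_m, a_1, \ldots, a_{m-3}\}, \qquad a_j \in A(u,v).
\]

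The main step is to locate the hub of $G[N(u) \cap N(w_1)] \cong W_m$. I would rule out every candidate except $c$: $v$ has degree exactly $3$ inside this subgraph (its only neighbors there are $c, w_2, w_m$, since $v \not\sim a_j$), which is less than $m$; each of $w_2, w_m$ has degree at most $m-1$ because $w_2 \not\sim w_m$ when $m \geq 4$; and each $a_j$ fails to be adjacent to $v$ and so has degree at most $m-1$. Thus $c$ is forced to be the hub, which means $c$ is adjacent to every other vertex of the subgraph; in particular $c \sim a_j$ for every $j$.

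This is the payoff: each extra $a_j$ lies in $A(u,v) \cap N(u) \cap N(c)$, but by the very first step $N(u) \cap N(c) = \{v, w_1, \ldots, w_m\}$, a set disjoint from $A(u,v)$. Since at least one $a_j$ must exist (because $m - 3 \geq 1$), we get a contradiction. The main obstacle in the argument is the case analysis that forces $c$ to be the hub; everything else is bookkeeping, and the hypothesis $m \geq 4$ enters precisely in ensuring $w_2 \not\sim w_m$ and that $v$'s degree $3$ is strictly smaller than $m$.
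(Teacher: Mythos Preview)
Your argument is correct and shares the paper's opening move: both first observe that the hub $c$ already has its shared neighborhood with $u$ saturated by $\{v,w_1,\ldots,w_m\}$, so $c$ can have no neighbor in $A(u,v)$. From there the two proofs diverge in which second edge they inspect. The paper looks at the shared neighborhood of two consecutive \emph{rim} vertices, notes that the induced triangle on $u,v,c$ must contain the new hub (since $C_m$ is triangle-free for $m\geq 4$), and then runs a three-case split on which of $u,v,c$ is that hub; this also requires the symmetric fact that $c$ has no neighbor in $B(u,v)$. You instead look at $N(u)\cap N(w_1)$ for a single rim vertex and eliminate every hub candidate except $c$ by a direct degree count. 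Your route is a bit more economical---it needs only the $A(u,v)$ side and avoids the case split---while the paper's triangle observation has the small advantage of immediately narrowing the hub to three named vertices. Either way the punchline is identical: the forced hub $c$ acquires a neighbor in $A(u,v)$, contradicting the saturation established at the outset.
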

\begin{proof}
    Suppose for contradiction $u \sim v$ such that $G[N(u) \cap N(v)] \cong W_m$ consisting of vertices $w_1,\dots,w_{m+1}$ such that $w_2,\dots,w_{m+1}$ are the vertices in the cycle and $w_1$ is adjacent to the vertices in the cycle.

    Consider adjacent vertices $u$ and $w_1$. $N(u) \cap N(w_1) = \{w_2,w_3,\dots,w_{m+1},v\}$. So $w_1$ is not adjacent to any vertex in $A(u,v)$.

    Similarly, $w_1$ is not adjacent to any vertex in $B(u,v)$.

    As $G[u,v,w_1] \cong K_3$ and $N(w_2) \cap N(w_3)$ contain $u,v,w_1$, then this $K_3$ is an induced subgraph of $G[N(w_2) \cap N(w_3)]$. As $m \geq 4$, one of $u,v,w_1$ must be the center of this wheel.

    If $u$ is the center, the other $m-2$ vertices in $N(w_2) \cap N(w_3)$ besides $u,v, w_1$ must be in $A(u,v)$, so $w_1 \sim a_i$ for some $a_i \in A(u,v)$, a contradiction.

    If $v$ is the center, the other $m-2$ vertices in $N(w_2) \cap N(w_3)$ besides $u,v, w_1$ must be in $B(u,v)$, so $w_1 \sim b_i$ for some $b_i \in B(u,v)$, a contradiction.

    If $w_1$ is the center, then as $m-2 > 0$, $u$ and $v$ are adjacent vertices on a cycle $C_m$ in $G[N(w_2) \cap N(w_3)]$ of length $m \geq 4$ which cannot contain any $w_j$, $j > 3$ (because $w_2 \not\sim w_j$). Then there is a $P_4$ $auvb$ on $C_m$ with $a \in A(u,v)$, $b \in B(u,v)$. But then $w_1$, as the center of the wheel, is adjacent in $G$ to both $a$ and $b$, whereas either adjacency contradicts a previous inference.

    Thus, $W_m$ is not a possible USNS when $m \geq 4$.
\end{proof}

A \textit{component}-regular graph is a graph such that each component is regular. Every known USNS graph is component-regular, and every aforementioned USNS-forbidden graph is not component-regular. Is it true that every USNS graph is component-regular?

\section{Constructions of $ER(n,d,\lambda)$ with USNS}

Given graphs $G_1$ and $G_2$, the Cartesian product of $G_1$ and $G_2$ is denoted $G_1 \square G_2$. The vertex set is defined by $V(G_1 \square G_2) = V(G_1) \times V(G_2)$. The edge set is defined by, given two vertices $(u,u')$ and $(v,v') \in V(G_1 \square G_2)$, $(u,u') \sim (v,v')$ if and only if either $u = v$ and $u' \sim v'$ (in $G_2$) or $u \sim v$ (in $G_1$) and $u' = v'$. 

It was shown in \cite{Glorioso} that if $G_1 \in ER(n_1,d_1,\lambda)$ and $G_2 \in ER(n_2,d_2,\lambda)$, then $G_1 \square G_2 \in ER(n_1n_2,d_1 + d_2,\lambda)$. However, it is rare that the Cartesian product of two edge-regular graphs that each have a USNS will have a USNS.

\begin{theorem}\label{thm:CartesianUSNS}
    Let $n_1,n_2 \geq 1$ and $d_1, d_2 \geq 0$. If $G_1 \in ER(n_1,d_1,\lambda)$ with a USNS $\cong X$ and $G_2 \in ER(n_2,d_2,\lambda)$ with a USNS $\cong Y$, then $G_1 \square G_2 \in ER(n_1n_2,d_1 + d_2,\lambda)$ has a USNS if and only if $X \cong Y$, in which case the USNS of $G_1 \square G_2$ is $X (\cong Y)$.
\end{theorem}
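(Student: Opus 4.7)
The plan is to partition the edges of $G_1 \square G_2$ into two types according to which coordinate changes between endpoints, compute the shared neighborhood (SNS) in each case, and combine. Specifically, every edge of $G_1 \square G_2$ is either of the form $(u,u') \sim (v,u')$ with $u \sim v$ in $G_1$ (a \emph{horizontal} edge) or of the form $(u,u') \sim (u,v')$ with $u' \sim v'$ in $G_2$ (a \emph{vertical} edge).

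First I would analyze a horizontal edge $(u,u') \sim (v,u')$. If $(w,w')$ is a common neighbor, the Cartesian-product definition forces two options for each of the two adjacency relations $(w,w') \sim (u,u')$ and $(w,w') \sim (v,u')$; enumerating the four resulting combinations, three of them are ruled out because they force either $u = v$ or a self-loop at $u'$, leaving the single surviving case $w \in N_{G_1}(u) \cap N_{G_1}(v)$ with $w' = u'$. Moreover, two such common neighbors $(w_1,u')$ and $(w_2,u')$ are adjacent in $G_1 \square G_2$ if and only if $w_1 \sim w_2$ in $G_1$, so the induced SNS is isomorphic to $G_1[N_{G_1}(u) \cap N_{G_1}(v)] \cong X$. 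A symmetric calculation (swapping the roles of the two coordinates) shows that every vertical edge has SNS isomorphic to $Y$.

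With these two facts in hand the theorem is nearly immediate. If $G_1 \square G_2$ has a USNS $U$, then, assuming $G_1$ and $G_2$ each contain at least one edge so that both horizontal and vertical edges actually occur, one has $U \cong X$ and $U \cong Y$, forcing $X \cong Y$. Conversely, if $X \cong Y$, every SNS in $G_1 \square G_2$ is isomorphic to $X$, so $X$ is the USNS. The main obstacle is nothing deep: the real care lies in being disciplined in the case analysis of common neighbors above, and noting the degenerate situation in which one factor has no edges (so only one kind of SNS occurs in the product and the USNS is inherited directly from the other factor).
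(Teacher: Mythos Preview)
Your proposal is correct and follows essentially the same approach as the paper: split edges of $G_1 \square G_2$ into the two Cartesian types, verify that the shared neighborhood for each type is the copy of $X$ (respectively $Y$) inherited from the corresponding factor, and conclude. Your write-up is in fact slightly more careful than the paper's, since you explicitly enumerate the four cases ruling out mixed-coordinate common neighbors and you flag the degenerate situation where one factor is edgeless.
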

\begin{proof}
    Let $G_1 \in ER(n_1,d_1,\lambda)$ with USNS $\cong X$ and $G_2 \in ER(n_2,d_2,\lambda)$ with USNS $\cong Y$.

    By assumption, $G_1 \square G_2 \in ER(n_1n_2,d_1 + d_2,\lambda)$ has a USNS. Suppose $(u,v) \sim (x,y)$ in $G_1 \square G_2$. Then by the definition of the Cartesian product, either $u = x$ in $G_1$ and $v \sim y$ in $G_2$ or $u \sim x$ in $G_1$ and $v = y$ in $G_2$.

    If $u = x$ in $G_1$ and $v \sim y$ in $G_2$, then $N_{G_1 \square G_2}((u,v),(x,y)) = \{(u,z) | z \in N_{G_2}(v) \cap N_{G_2}(y)\}$ which induces, in $G_1 \square G_2$, a graph isomorphic to $Y$.

    Similarly, if $u \sim x$ in $G_1$ and $v = y$ in $G_2$, then $N_{G_1 \square G_2}((u,v),(x,y))$ induces, in $G_1 \square G_2$, a graph isomorphic to $X$. However, $G_1 \square G_2$ has a USNS, by assumption. Thus, $X \cong Y$.

    In the other direction, $X \cong Y$ by assumption. By the definition of the Cartesian product, if $(u,v) \sim (x,y)$ in $G_1 \square G_2$, then either $u = x$ in $G_1$ and $v \sim y$ in $G_2$ or $u \sim x$ in $G_1$ and $v = y$ in $G_2$. Similar to the previous direction, the intersection of open neighborhoods in $G_1 \square G_2$ yields a graph isomorphic to either $X$ or $Y$. As $X \cong Y$, then $G_1 \square G_2$ has a USNS.
\end{proof}

The tensor product of $G_1$ and $G_2$ is denoted $G_1 \otimes G_2$. The vertex set is $V(G_1 \otimes G_2) = V(G_1) \times V(G_2)$. The edge set is defined by, given two vertices $(u,u')$ and $(v,v') \in V(G_1 \otimes G_2)$, $(u,u') \sim (v,v')$ if and only if $u \sim v$ in $G_1$ and $u' \sim v'$ in $G_2$. By previous work in \cite{Glorioso}, if $G_1 \in ER(n_1,d_1,\lambda_1)$ and $G_2 \in ER(n_2,d_2,\lambda_2)$, then $(G_1 \otimes G_2) \in ER(n_1n_2,d_1d_2,\lambda_1\lambda_2)$. The following theorem extends the work in \cite{Glorioso} to include the preservation and structure of the USNS in $G_1 \otimes G_2$.

\begin{theorem}\label{thm:TensorUSNS}
    If $G_1 \in ER(n_1,d_1,\lambda_1)$ with a USNS $\cong H_1$ and $G_2 \in ER(n_2,d_2,\lambda_2)$ with a USNS $\cong H_2$, then $(G_1 \otimes G_2) \in ER(n_1n_2,d_1d_2,\lambda_1\lambda_2)$ with a USNS $\cong H_1 \otimes H_2$.
\end{theorem}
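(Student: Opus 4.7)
The plan is to unpack the definitions of the tensor product and chase which vertex pairs land in the shared neighborhood of a typical edge. Let $(u_1,u_2)\sim(v_1,v_2)$ be an arbitrary edge in $G_1\otimes G_2$; by the definition of the tensor product this means $u_1\sim v_1$ in $G_1$ and $u_2\sim v_2$ in $G_2$. I would first describe the vertex set of $N_{G_1\otimes G_2}((u_1,u_2))\cap N_{G_1\otimes G_2}((v_1,v_2))$: a vertex $(x_1,x_2)$ lies in this intersection iff $(x_1,x_2)\sim(u_1,u_2)$ and $(x_1,x_2)\sim(v_1,v_2)$ in $G_1\otimes G_2$, which by the edge rule amounts to requiring $x_1\in N_{G_1}(u_1)\cap N_{G_1}(v_1)$ and $x_2\in N_{G_2}(u_2)\cap N_{G_2}(v_2)$ simultaneously. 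Thus the vertex set of the shared neighborhood is exactly the Cartesian product of the two separate shared neighborhoods.

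Next, I would determine the induced adjacencies. Two distinct vertices $(x_1,x_2)$ and $(y_1,y_2)$ in this vertex set are adjacent in $G_1\otimes G_2$ iff $x_1\sim y_1$ in $G_1$ and $x_2\sim y_2$ in $G_2$. This is precisely the edge relation of the tensor product of $G_1[N_{G_1}(u_1)\cap N_{G_1}(v_1)]$ with $G_2[N_{G_2}(u_2)\cap N_{G_2}(v_2)]$. Applying the USNS hypotheses gives $G_1[N_{G_1}(u_1)\cap N_{G_1}(v_1)]\cong H_1$ and $G_2[N_{G_2}(u_2)\cap N_{G_2}(v_2)]\cong H_2$, and since the tensor product is well-defined up to isomorphism on each factor, the induced subgraph is isomorphic to $H_1\otimes H_2$.

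Because the edge $(u_1,u_2)\sim(v_1,v_2)$ was arbitrary, the same isomorphism type $H_1\otimes H_2$ appears at every edge of $G_1\otimes G_2$, so $G_1\otimes G_2$ has a USNS and it is isomorphic to $H_1\otimes H_2$. The edge-regularity parameters $ER(n_1n_2,d_1d_2,\lambda_1\lambda_2)$ are quoted from \cite{Glorioso} and do not need to be reproved.

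There is no real obstacle here; the argument is a direct definition chase. The one place where care is warranted is in verifying the second step, namely that the subgraph induced in $G_1\otimes G_2$ on the product vertex set really does coincide with the tensor product of the induced subgraphs (as opposed to some larger or smaller graph). This follows because the tensor-product edge rule is applied pointwise to coordinates and does not depend on vertices outside the chosen subset, so restricting to a product of subsets commutes with forming the tensor product.
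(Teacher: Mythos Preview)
Your proposal is correct and follows essentially the same approach as the paper: identify the shared neighborhood of an arbitrary edge in $G_1\otimes G_2$ as the Cartesian product of the two coordinate shared neighborhoods, then observe that the induced edge relation is exactly that of $H_1\otimes H_2$. Your write-up is in fact somewhat more explicit than the paper's, which compresses the edge-relation verification into the single remark that the product vertex set ``induces $H_1\otimes H_2$ in $G_1\otimes G_2$.''
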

\begin{proof}
    Suppose that $(u,v) \sim (x,y)$ in $G_1 \otimes G_2$. Then $(s,t) \in N_{G_1 \otimes G_2}(u,v) \cap N_{G_1 \otimes G_2}(x,y)$ if and only if $u \sim s$, $x \sim s$ in $G_1$ and $v \sim t$, $y \sim t$ in $G_2$. Thus, $N_{G_1 \otimes G_2}(u,v) \cap N_{G_1 \otimes G_2}(x,y) = (N_{G_1}(u) \cap N_{G_1}(x)) \times (N_{G_1}(v) \cap N_{G_1}(y)) = V(H_1) \times V(H_2)$, and this set induces $H_1 \otimes H_2$ in $G_1 \otimes G_2$.
\end{proof}

For example, $K_n \otimes K_m \cong K_{m,m,\dots,m} \setminus \{(n-1)$-factor\}, an $n$-partite graph with uniform part size $m$ where the $(n-1)$-factor are the column edges when the vertices are arranged in a $n \times m$ matrix. The USNS of $(K_n \otimes K_m) \cong K_{n-2} \otimes K_{m-2} \cong K_{m-2,m-2,\dots,m-2} \setminus \{(n-3)$-factor\}, an $(n-2)$-partite graph with uniform part size $m-2$, where the $(n-3)$-factor are column edges when the vertices are arranged in a $(n-2) \times (m-2)$ matrix.

Using the above example, given $G \in ER(n,d,\lambda)$ with USNS $\cong H$ where $|H| = \lambda$, then $K_3 \otimes G$ yields a USNS of $nK_1$. In other words, the tensor product of an edge-regular graph $G$ with some USNS and a $K_3$ removes all of the edges of the USNS of $G$ as a new USNS.

Another example: $G_1 \otimes G_2$, where $G_1 \in ER(n,d,\lambda)$ and $G_2$ is a triangle-free regular graph, has an empty graph USNS.

Another useful graph construction for edge-regular graphs is the shadow of a graph. Enlarging the definition in \cite{Shadow}, given a graph $G$, define $D_m(G)$ to be the $m^{th}$ \textit{shadow graph} of $G$, by $V(D_m(G)) = \{v^i_j | i \in [m]; j \in [n]\}$, given that $V(G) = \{v_1,\dots,v_n\}$; the vertices $v^i_j$ and $v^k_l$ are adjacent in $D_m(G)$ if $v_j \sim v_l$ in $G$ for $j,l \in [n]$ and $i,k \in [m]$. See \cref{fig:D3K3} for an example.

\begin{theorem}\label{thm:ShadowUSNS}
     If $G \in ER(n,d,\lambda)$ with a USNS $\cong H$, then $D_m(G) \in ER(mn,md,m\lambda)$ with a USNS $\cong D_m(H)$.
\end{theorem}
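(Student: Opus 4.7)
The plan is to proceed directly from the definition of $D_m(G)$ by unpacking adjacency and neighborhoods one layer at a time. Write a typical vertex as $v_j^i$ with $j \in [n]$ a ``base'' index and $i \in [m]$ a ``layer'' index, and observe the key feature: adjacency $v_j^i \sim v_l^k$ in $D_m(G)$ depends only on the base indices (namely, on whether $v_j \sim v_l$ in $G$). This single observation drives everything.

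First I would verify the parameter count. Counting vertices gives $|V(D_m(G))| = mn$. For regularity, fix $v_j^i$; its neighbors are exactly $\{v_l^k : v_l \in N_G(v_j),\ k \in [m]\}$, so $\deg(v_j^i) = m \cdot d$. Thus $D_m(G)$ is $md$-regular.

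Next I would compute the common neighborhood of an arbitrary edge. Pick adjacent $v_j^i \sim v_l^k$ in $D_m(G)$, so $v_j \sim v_l$ in $G$. A vertex $v_r^s$ lies in $N_{D_m(G)}(v_j^i) \cap N_{D_m(G)}(v_l^k)$ iff $v_r \sim v_j$ and $v_r \sim v_l$ in $G$, i.e., iff $v_r \in N_G(v_j) \cap N_G(v_l)$. The layer index $s$ is completely unconstrained, so
\[
N_{D_m(G)}(v_j^i) \cap N_{D_m(G)}(v_l^k) = \{v_r^s : v_r \in N_G(v_j) \cap N_G(v_l),\ s \in [m]\},
\]
a set of size $m\lambda$. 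Hence $D_m(G) \in ER(mn, md, m\lambda)$.

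Finally I would identify the induced subgraph on this common neighborhood with $D_m(H)$. Since $G$ has a USNS, the base vertices $\{v_r : v_r \in N_G(v_j) \cap N_G(v_l)\}$ induce a copy of $H$ in $G$. The common neighborhood above is precisely the set of layered copies of these base vertices. Two such copies $v_r^s$ and $v_{r'}^{s'}$ are adjacent in $D_m(G)$ iff $v_r \sim v_{r'}$ in $G$, which, restricted to this base set, is exactly the adjacency rule defining $D_m(H)$ from $H$. Thus the induced subgraph is $D_m(H)$, independent of the choice of edge, giving the claimed USNS. There is no real obstacle here; the only care needed is the bookkeeping between base and layer indices, and making explicit that adjacency in $D_m(G)$ ignores the layer index so that all $m$ ``lifts'' of each common $G$-neighbor automatically appear.
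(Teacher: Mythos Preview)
Your proposal is correct and follows essentially the same approach as the paper: verify the vertex count, compute neighborhoods to get $md$-regularity, show the common neighborhood of an edge in $D_m(G)$ consists of all $m$ layer-copies of the $\lambda$ base common neighbors, and then observe that the induced subgraph on that set obeys exactly the adjacency rule defining $D_m(H)$. If anything, your bookkeeping (making explicit that adjacency in $D_m(G)$ ignores the layer index) is slightly more detailed than the paper's, but the argument is the same.
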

\begin{proof}
    Let $G \in ER(n,d,\lambda)$. Then by construction the $m^{th}$ shadow of $G$ contains $m$ copies of every vertex of $G$, so $|D_m(G)| = mn$.

    Now suppose $N_{G}(v_i) = \{u_1,\dots,u_d\}$. Then $v^k_i$ is adjacent to each of $\{u^1_1,\dots,u^1_d,u^2_1,\dots,u^2_d,\dots,u^m_1,\dots,u^m_d\}$ for $k \in [m]$. So $D_m(G)$ is regular of degree $md$.

    Using similar logic, say $v_i \sim v_j$ in $G$ such that $N(v_i) \cap N(v_j) = \{u_1,\dots,u_\lambda\}$. Then $N(v^k_i) \cap N(v^l_j) = \{u^\alpha_\beta | \alpha \in [m]; \beta \in [\lambda]\}$ for $k,l \in [m]$. Thus, every pair of adjacent vertices in $D_m(G)$ share exactly $m\lambda$ vertices.

    Further, as $G[\{v_1,\dots,v_\lambda\}] \cong H$, then $N(v^k_i) \cap N(v^l_j)$ contains exactly $m$ copies of $H$, one in each shadow. The edge set among these $m$ copies of $H$ are as defined in the $m^{th}$ shadow graph. Thus, $D_m(G)$ has a USNS $\cong D_m(H)$.
\end{proof}

Iteration of a USNS with the shadow graph function allows for additional infinite families of USNS.

\begin{theorem}\label{thm:ShadowChain}
    $D_q(D_m(G)) \cong D_{qm}(G)$ for integers $q,m \geq 2$.
\end{theorem}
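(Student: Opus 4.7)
The plan is to exhibit an explicit vertex bijection $\varphi : V(D_q(D_m(G))) \to V(D_{qm}(G))$ and check that it preserves adjacency. Using the paper's notation with $V(G) = \{v_1,\dots,v_n\}$, a typical vertex of $D_m(G)$ has the form $v_j^i$ with $i \in [m]$ and $j \in [n]$, so a typical vertex of $D_q(D_m(G))$ has the form $(v_j^i)^a$ with $a \in [q]$, $i \in [m]$, $j \in [n]$. I would set
\[ \varphi\bigl((v_j^i)^a\bigr) \;=\; v_j^{(a-1)m + i}. \]

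First I would confirm $\varphi$ is a bijection. The map $(a,i) \mapsto (a-1)m + i$ is the standard bijection $[q] \times [m] \to [qm]$ coming from the division algorithm, so coupled with the identity on the $j$-coordinate, $\varphi$ is a bijection between the two vertex sets, each of cardinality $qmn$.

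The edge-preservation check relies on the key observation that in any shadow graph $D_r(H)$, adjacency depends only on the adjacency of the underlying vertices in $H$, never on the superscripts. Applying this twice, $(v_j^i)^a \sim (v_l^k)^b$ in $D_q(D_m(G))$ iff $v_j^i \sim v_l^k$ in $D_m(G)$ iff $v_j \sim v_l$ in $G$. On the other side, $v_j^c \sim v_l^d$ in $D_{qm}(G)$ iff $v_j \sim v_l$ in $G$. Both conditions reduce to the same criterion on the indices $j$ and $l$ alone, so $\varphi$ and its inverse are edge-preserving, yielding the claimed isomorphism.

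There is no serious obstacle; the argument is essentially bookkeeping once one notices that each $D_r$ is really a blow-up replacing every vertex by $r$ independent copies, and that such blow-ups compose by multiplying the replacement sizes. The only care required is writing out the index arithmetic to exhibit the bijection between $[q] \times [m]$ and $[qm]$.
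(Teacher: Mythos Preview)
Your proof is correct and follows essentially the same approach as the paper: both arguments identify the vertex sets via the obvious relabeling and then observe that adjacency in either iterated shadow graph depends only on whether the underlying base vertices are adjacent in $G$. Your version is slightly more explicit in writing down the bijection $(a,i)\mapsto (a-1)m+i$ from $[q]\times[m]$ to $[qm]$, but the substance is identical.
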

\begin{proof}
    Suppose $V(G) = \{v_1,\dots,v_n\}$ and $V(D_m(G)) = \{v^i_j | i = 1,\dots,m; j = 1,\dots,n\}$. Then $V(D_q(D_m(G))) = \{v^{i,k}_j | i = 1,\dots,m; j = 1,\dots,n; k = 1,\dots,q\}$.

    Alternatively, in $V(D_{qm}(G)) = \{v_1,\dots,v_{qm}\}$, relabel the vertices such that the $i^{th}$ vertex in the $q^{th}$ copy of the $m^{th}$ copy of the vertices is denoted $v^{m,q}_i$ for $i = 1,\dots,n$. So $\{v_{qm}\} = V(D_q(D_m(G)))$.

    In both cases, $v^{m_1,q_1}_i \sim v^{m_2,q_2}_j$ if $v_i \sim v_j$ in $G$ for $i \neq j; 1 \leq i,j \leq n; 1 \leq m_1,m_2 \leq m; 1 \leq q_1,q_2 \leq q$. Then $E(D_q(D_m(G))) = E(D_{qm}(G))$. So $D_q(D_m(G)) \cong D_{qm}(G)$ for all $q,m \geq 2$.
\end{proof}

As such, a chain of shadow graph functions can be processed as a single shadow graph function.

For example, $D_m(K_n) \cong K_{m,m,\dots,m} \cong T_{mn,n} \in ER(mn,m(n-1),m(n-2))$, a complete $n$-partite graph with uniform partition size $m$, commonly known as a (regular) \textit{Tur\'an graph}. $D_3(K_3) \cong T_{9,3}$ is shown in \cref{fig:D3K3}. So the USNS of $D_m(K_n)$ is $D_m(K_{n-2}) \cong K_{m,m,\dots,m} \cong T_{m(n-2),n-2}$, the Tur\'an graph on $m(n-2)$ vertices with partition size $m$ and $n-2$ parts.

\begin{figure}[h]
    \centering
    \includegraphics[width=.2\paperwidth]{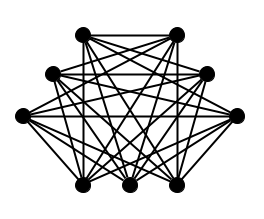}
    \caption{$D_3(K_3) \cong T_{9,3}$ with USNS of $D_3(K_1) \cong T_{3,1} = \overline{K_3}$}
    \label{fig:D3K3}
\end{figure}

As stated in the preliminaries of the paper, not all edge-regular graphs have a connected USNS. Consider $\mathcal{P}$, the Petersen graph; $\mathcal{P} \in ER(10,3,0)$.

The complement of the Petersen graph, however, is the interesting case. This graph is also already known to be edge-regular, as discussed in the $d = \lambda + 3$ case of \cite{ExtremalER}. $\mathcal{P}^C \in ER(10,6,3)$ with a USNS of $K_2 + K_1$.

\section{Conway's 99-graph Problem}

A strongly regular graph $SR(n,d,\lambda,\mu)$ is a graph in $ER(n,d,\lambda)$ such that every pair of non-adjacent vertices share exactly $\mu$ common neighbors. Conway's \textit{99-graph problem} is an open problem that asks about the existence of a strongly regular graph on parameters $SR(99,14,1,2)$ \cite{Conway}. There is some work that can be done with edge-regular graphs and the Cartesian product to show the non-existence of the 99-graph strictly using Cartesian products. 

First, define a graph $G$ as a \textit{regular clique assembly}, denoted $RCA(n,d,k)$, as a graph on $n$ vertices, regular of degree $d$, and $k = \omega(G)$ (the clique number of $G$). An RCA graph $G$ has three distinct properties: $\omega(G) \geq 2$, every maximal clique of $G$ is maximum, and each edge of $G$ is in exactly one maximum clique of $G$ \cite{RCA}.

If the 99-graph $G$ exists, then it is necessarily an edge-regular graph in $ER(99,14,1)$. This is equivalent to a regular clique assembly on the parameters $RCA(99,14,3)$ by Corollary 1 in \cite{RCA}. The idea here is to try to construct $RCA(99,14,3)$ using a Cartesian product of two graphs $G_1$ and $G_2$, and to show that there is no such combination using already proven facts about RCA graphs.

\begin{theorem}\label{thm:ConwayCartesian}
    If Conway's 99-graph exists, then it cannot be constructed with the Cartesian product of two RCA graphs.
\end{theorem}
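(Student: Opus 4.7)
The plan is to assume for contradiction that the 99-graph $G$ equals $G_1 \square G_2$ for some RCA graphs $G_1, G_2$ (so both $n_1, n_2 \geq 2$ since RCA requires $\omega \geq 2$) and derive a numerical contradiction from the fact that $G$ is $SR(99,14,1,2)$.

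First I would use $\lambda = 1$ to pin down the parameters of the factors. In any Cartesian product, the common neighbors of a horizontal edge $(u_1,v) \sim (u_2,v)$ lie entirely in the corresponding horizontal copy of $G_1$ and are precisely the vertices $(w,v)$ with $w \in N_{G_1}(u_1) \cap N_{G_1}(u_2)$; the symmetric statement holds for vertical edges. Requiring exactly one such common neighbor for every edge of $G \in ER(99,14,1)$ forces $G_1 \in ER(n_1,d_1,1)$ and $G_2 \in ER(n_2,d_2,1)$, with $n_1 n_2 = 99$ and $d_1 + d_2 = 14$.

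Next I would use $\mu = 2$ to force each factor to be complete. Take any pair $(u_1,v_1), (u_2,v_2)$ with $u_1 \neq u_2$ and $v_1 \neq v_2$; by the Cartesian adjacency rules such a pair is automatically non-adjacent in $G_1 \square G_2$. A case analysis on where a candidate common neighbor $(u',w)$ can lie shows that the pair has exactly two common neighbors, namely $(u_1,v_2)$ and $(u_2,v_1)$, when $u_1 \sim u_2$ in $G_1$ and $v_1 \sim v_2$ in $G_2$, and zero common neighbors otherwise. Since every non-adjacent pair in $G$ has $\mu = 2$ common neighbors, each of $G_1, G_2$ must have every pair of distinct vertices adjacent, so $G_1 \cong K_{n_1}$ and $G_2 \cong K_{n_2}$.

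Combining the two steps, the $\lambda_i = 1$ constraint applied to $K_{n_i}$ (whose edges have $n_i - 2$ common neighbors) forces $n_1 = n_2 = 3$, so $n_1 n_2 = 9 \neq 99$, the desired contradiction. The only delicate step is the common-neighbor case analysis in the second paragraph: one must enumerate the four combinations of ``$u' = u_i$ with $w \sim v_i$'' versus ``$u' \sim u_i$ with $w = v_i$'' across the two endpoints, and verify that two of the four cases collapse to impossibilities (demanding either $u_1 = u_2$ or $v_1 = v_2$), leaving exactly the two ``diagonal'' common neighbors. This is routine but must be done carefully.
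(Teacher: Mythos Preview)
Your argument is correct and takes a genuinely different route from the paper. The paper never touches the $\mu = 2$ condition; instead it observes that an $ER(99,14,1)$ Cartesian product forces each factor into $ER(n_i,d_i,1) = RCA(n_i,d_i,3)$, then runs a case analysis on the factorizations $99 = 3 \cdot 33 = 9 \cdot 11$ together with the even-degree constraint $2 \mid d_i$ from RCA theory, eliminating each candidate $(n_i,d_i)$ pair by citing nonexistence results from \cite{RCA}. Your proof is shorter and self-contained: exploiting $\mu = 2$ on the ``diagonal'' non-adjacent pairs $(u_1,v_1),(u_2,v_2)$ forces both factors to be complete, after which $\lambda_i = 1$ collapses everything to $K_3 \,\square\, K_3$, which has only $9$ vertices. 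The trade-off is that the paper's argument, since it uses only edge-regularity, in effect rules out \emph{any} graph in $ER(99,14,1)$ as such a product, whereas your argument relies on the full strongly regular hypothesis; but since the theorem is stated specifically for Conway's 99-graph, your use of $\mu$ is entirely legitimate.
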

\begin{proof}
Suppose $G_1 \in RCA(n_1,d_1,3)$ and $G_2 \in RCA(n_2,d_2,3)$. Then $G_1 \square G_2 \in RCA(n_1n_2,d_1 + d_2,3)$ by \cref{thm:CartesianUSNS}. Using the necessary conditions of $RCA$ graphs stated in Proposition 1 of \cite{RCA}, $2 \mid d_1$ and $2 \mid d_2$. There are only two options for $n_1$ and $n_2$, namely the pairs $\{33,3\}$ and $\{11,9\}$.

Let $n_1 = 3$ and $n_2 = 33$. As with all regular graphs, $n > d$, so $G_1$ must have degree 2. As such, $G_1 \in RCA(3,2,3) = ER(3,2,1) \cong K_3$. Then $G_2 \in RCA(33,12,3) = ER(33,12,1)$. By Corollary 10 of \cite{RCA}, $ER(33,12,1) = \emptyset$. So $\{33,3\}$ is not a possible pair of vertices of $G_1$ and $G_2$.

Let $n_1 = 9$ and $n_2 = 11$. Then for $G_1$ the only possible $d_2$ are $\{2,4,6,8\}$ since $n_1 = 9 > d_1$.

If $G_1 \in RCA(9,2,3)$, then $G_2 \in RCA(11,12,3)$, impossible as $n_2 < d_2$. If $G_1 \in RCA(9,4,3)$, then $G_2 \in RCA(11,10,3) = ER(11,10,1)$. Given that $n_2 = d_2 + 1$, then $G_2$ would need to be $K_{11}$, of which $\lambda = 9 \neq 1$, so $RCA(11,10,3) = ER(11,10,1) = \emptyset$. If $G_1 \in RCA(9,6,3)$, then $G_2 \in RCA(11,8,3) = ER(11,8,1)$. By Proposition 5 of \cite{RCA}, $3 \nmid nd = 88$, so $ER(11,8,1) = \emptyset$.

Finally, if $G_1 \in RCA(9,8,3) = ER(9,8,1)$, then as $n_1 = d_1 + 1$, $G_1$ is $K_9$. Yet $\{K_9\} = ER(9,8,7)$, so $ER(9,8,1) = \emptyset$.

Thus, if the 99-graph can be constructed, it cannot be done with the Cartesian product of two graphs applied to RCAs.
\end{proof}

Using similar logic, it is straightforward to show that the Tensor product of two graphs cannot yield Conway's 99-graph.

\begin{theorem}\label{thm:ConwayTensor}
    If Conway's 99-graph exists, then it cannot be constructed with the Tensor product of edge-regular graphs.
\end{theorem}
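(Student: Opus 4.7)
The plan is to follow the same template as \cref{thm:ConwayCartesian}, substituting the tensor parameter identities from \cref{thm:TensorUSNS} in place of the Cartesian ones. Assume for contradiction that the $99$-graph decomposes as $G_1 \otimes G_2$ with $G_i \in ER(n_i, d_i, \lambda_i)$; then \cref{thm:TensorUSNS} forces $n_1 n_2 = 99$, $d_1 d_2 = 14$, and $\lambda_1 \lambda_2 = 1$. The third identity immediately pins down $\lambda_1 = \lambda_2 = 1$, so both factors lie in the family $ER(n, d, 1)$.

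Two elementary constraints on this family then narrow the search. First, a degree-$1$ vertex in $ER(n, d, 1)$ would produce an edge with no common neighbor, so every nontrivial member has $d \geq 2$. Second, counting triangles in an $ER(n, d, 1)$ graph (each of the $nd/2$ edges lying in exactly one triangle) yields $nd/6$ triangles in total, giving the necessary condition $6 \mid nd$. From $d_i \geq 2$ and $d_1 d_2 = 14$ the only factorization is $\{d_1, d_2\} = \{2, 7\}$, and from $n_i \geq d_i + 1$ together with $n_1 n_2 = 99$ the only surviving partitions are $\{n_1, n_2\} \in \{\{3, 33\}, \{9, 11\}\}$.

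It then remains to dispatch the compatible quadruples $(n_1, d_1, n_2, d_2)$. The assignment $(n, d) = (3, 7)$ is impossible because $d > n - 1$, which eliminates one sub-case of the $\{3, 33\}$ split. Of the three remaining sub-cases, one factor is forced to be an element of $ER(33, 7, 1)$, $ER(11, 7, 1)$, or $ER(11, 2, 1)$; the corresponding products $nd = 231,\ 77,\ 22$ all fail $6 \mid nd$, so each such class is empty. Since every admissible pairing collapses, no tensor decomposition of the $99$-graph can exist.

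I expect no serious obstacle here: the heavy lifting is already packaged in \cref{thm:TensorUSNS}, and what remains is the same parameter bookkeeping used in \cref{thm:ConwayCartesian}. The only step that needs a brief justification is the $6 \mid nd$ obstruction for $ER(n, d, 1)$; alternatively, one can translate to $RCA(n, d, 3)$ and cite the divisibility propositions from \cite{RCA} exactly as in the Cartesian case.
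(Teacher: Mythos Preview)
Your proposal is correct and follows the same overall strategy as the paper: invoke \cref{thm:TensorUSNS} to get $n_1n_2=99$, $d_1d_2=14$, $\lambda_1=\lambda_2=1$, then kill the finitely many factorizations by divisibility obstructions. The paper's write-up is shorter because after reaching $d_1=2$, $\lambda_1=1$ it jumps straight to $n_1=3$ (tacitly using that the $99$-graph is connected, so $G_1$ cannot be a nontrivial disjoint union of triangles) and then cites the $RCA$ divisibility $(k-1)\mid d$ from \cite{RCA} to dispose of $ER(33,7,1)$. Your version instead keeps the $\{9,11\}$ split in play and eliminates $ER(33,7,1)$, $ER(11,7,1)$, $ER(11,2,1)$ uniformly via the triangle-count condition $6\mid nd$; this is exactly the elementary form of the $RCA$ divisibility, so the two arguments are equivalent in spirit, with yours being slightly more self-contained and not relying on the implicit connectedness step.
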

\begin{proof}
    Suppose $G_1 \in ER(n_1,d_1,\lambda_1)$ and $G_2 \in ER(n_2,d_2,\lambda_2)$ such that $G_1 \otimes G_2 \in ER(99,14,1)$. By \cref{thm:TensorUSNS}, $n_1n_2 = 99$, $d_1d_2 = 14$, and $\lambda_1\lambda_2 = 1$. Thus, $\lambda_1 = \lambda_2 = 1$. Further, $d_1d_2 = 1 \cdot 14$ or $d_1d_2 = 2 \cdot 7$.

    Suppose $d_1d_2 = 1 \cdot 14$ and without loss of generality, $d_1 = 1$. Then $\lambda_1 = 1 = d_1$, a contradiction as $d > \lambda$ for all edge-regular graphs. Thus, $d_1d_2 \neq 1 \cdot 14$.

    Suppose $d_1d_2 = 2 \cdot 7$ and without loss of generality, $d_1 = 2$. Then as $\lambda_1 = 1$ and $d_1 = 2$, $n_1 = 3$. So $n_2 = 33$, $d_2 = 7$, and $\lambda_2 = 1$. An edge-regular graph $ER(33,7,1) = RCA(33,7,3)$ by Corollary 1 in \cite{RCA}. Yet $RCA(33,7,3)$ does not exist by Proposition 1 in \cite{RCA}, as $k-1 = 2 \nmid 7$. So $d_1d_2 \neq 2 \cdot 7$.

    Thus, $G_1 \otimes G_2 \notin ER(99,14,1)$. Conway's 99-graph cannot be constructed using a Tensor product of edge-regular graphs.
\end{proof}

\newpage

\bibliography{references}
\bibliographystyle{IEEEtran}

\end{document}